%
\documentclass [12pt]{amsart}
\usepackage{amsmath,amssymb}
 \usepackage{amsthm, amsfonts}
 \usepackage{enumerate}

\usepackage{cite}
\newtheorem{theorem}{Theorem}[section]

\theoremstyle{definition}
\newtheorem{definition}[theorem]{Definition}
\newtheorem{example}[theorem]{Example}

\theoremstyle{remark}

\numberwithin{equation}{section}

\begin{document}

\title [{{ On $gr$-quasi-semiprime submodules}}]{ On $gr$-quasi-semiprime submodules}

 \author[{{K. Al-Zoubi and S. Alghueiri }}]{\textit{Khaldoun Al-Zoubi* and Shatha Alghueiri}}

\address
{\textit{Khaldoun Al-Zoubi, Department of Mathematics and
Statistics, Jordan University of Science and Technology, P.O.Box
3030, Irbid 22110, Jordan.}}
\bigskip
{\email{\textit{kfzoubi@just.edu.jo}}}
\address
{\textit{ Shatha Alghueiri, Department of Mathematics and
Statistics, Jordan University of Science and Technology, P.O.Box
3030, Irbid 22110, Jordan}}
\bigskip
{\email{\textit{ghweiri64@gmail.com}}}

\maketitle
\date{}
\begin{abstract}
 Let $G$ be a group. A ring $R$ is called a graded ring (or $G$-graded ring)
if there exist additive subgroups $R_{\alpha }$ of $R$ indexed by the
elements $\alpha \in G$ such that $R=\bigoplus_{\alpha \in G}R_{\alpha }$
and $R_{\alpha }R_{\beta }\subseteq R_{\alpha \beta }$ for all $\alpha $, $%
\beta \in G$. If an element of $R$ belongs to $h(R)=\cup _{\alpha \in
G}R_{\alpha }$, then it is called a homogeneous. A Left $R$-module $M$ is
said to be \textit{a graded }$R$\textit{-module} if there exists a family of
additive subgroups $\{M_{\alpha }\}_{\alpha \in G}$ of $M$ such that $%
M=\bigoplus_{\alpha \in G}M_{\alpha }$ and $R_{\alpha }M_{\beta }\subseteq
M_{\alpha \beta }$ for all $\alpha ,\beta \in G.$ Also if an element of $M$
belongs to $\cup _{\alpha \in G}M_{\alpha }=h(M)$, then it is called a
homogeneous. A submodule $N$ of $M$ is said to be \textit{a graded submodule
of }$M$ if $N=\bigoplus_{\alpha \in G}(N\cap M_{\alpha }):=\bigoplus_{\alpha
\in G}N_{\alpha }$. Let $G$ be a group with identity $e$. Let $R$ be a $G$%
-graded commutative ring and $M$ a graded $R$-module. A proper graded
submodule $S$ of $M$ is said to be \textit{a graded semiprime (}shortly $gr$%
\textit{-semiprime) submodule} if whenever $r^{n}m\in S$ where $r\in h(R)$, $%
m\in h(M)$ and $n\in Z^{+}$, then $rm\in S.$ In this work, we introduce the
concept of graded quasi-semiprime (shortly $gr$-quasi-semiprime) submodule
as a generalization of $gr$-semiprime submodule and give some basic
properties of these classes of graded submodules. We say that a proper
graded submodule $S$ of $M$ is a $gr$-quasi-semiprime submodule if $%
(S:_{R}M)=\{r\in R:rM\subseteq S\}$ is a $gr$-semiprime ideal of $R$.
\\
{\bf Keywords:} graded quasi-semiprime submodule, graded semiprime submodule, graded prime.
\\
{\bf MSC:} 13A02, 16W50.
\end{abstract}



 \section{Introduction and Preliminaries}
Throughout this paper all rings are commutative with identity and all
modules are unitary.

 Graded semiprime submodules of graded modules over graded commutative rings, have been introduced and studied in \cite{1, 5, 7, 12}. Also, the
concept of graded semiprime ideal was introduced by Lee and Varmazyar \cite{7}  and studied
in \cite{4}.

Recently, K. Al-Zoubi, R. Abu-Dawwas and I. Al-Ayyoub in \cite{1}
introduced and studied the concept of graded semi-radical of graded submodules in graded modules.

Here, we introduce the concept of graded quasi-semiprime submodules of
graded modules over a commutative graded rings as a generalization of graded
semiprime submodules and investigate some properties of these classes of
graded submodules.
Let $R$ be a $G$-graded ring, $M$ a graded $R$-module and $N$ a graded
submodule of $M$. Then $(N:_{R}M)$ is defined as $(N:_{R}M)=\{r\in
R:rM\subseteq N\}.$ It is shown in \cite[Lemma 2.1]{3} that if $N$ is a graded
submodule of $M$, then $\ (N:_{R}M)\ $ is a graded ideal of $R$. The
annihilator of $M$ is defined as $(0:_{R}M)$ and is denoted by $Ann_{R}(M)$.

A proper graded submodule $N$ of $M$ is said to be \textit{a graded
semiprime submodule} if whenever $r\in h(R)$, $m\in h(M)$ and $n\in Z^{+}$
with $\ r^{n}m\in N$, then $rm\in N$, (see \cite{5}.) A proper graded
ideal $I$ of $R$ is said to be graded semiprime ideal if whenever $r,s\in
h(R)$ and $n\in Z^{+}$ with $r^{n}s\in I$, then $rs\in I$, (see \cite{4}.)
For more information about the properties of graded rings and graded modules see \cite{6, 8, 9, 10}.



 \section{Results}
\begin{definition}
 Let $R$ be a $G$-graded ring and $M$ a graded $R$-module. A proper graded submodule $N$ of $M$ is said to be \emph{a graded
quasi-semiprime submodule of $M$} if $(N:_{R}M)$ is a graded semiprime
ideal of $R$.
\end{definition}

\begin{theorem}
Let $R$ be a $G$-graded ring, $M$ a graded $R$-module and $N$ a proper
graded submodule of $M$. If $N$ is a graded semiprime submodule of $M,$ then
$N$ is a graded quasi-semiprime submodule of $M$.
\end{theorem}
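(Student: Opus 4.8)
The plan is to unwind the definition of graded quasi-semiprime and reduce everything to the homogeneous condition defining a graded semiprime submodule. First I would note that $(N:_{R}M)$ is automatically a graded ideal of $R$ by \cite[Lemma 2.1]{3}, and that it is proper: since $N$ is a proper graded submodule we have $M\not\subseteq N$, hence $1\notin (N:_{R}M)$. So the only thing left to check is that $(N:_{R}M)$ satisfies the graded semiprime condition for ideals, namely that $r,s\in h(R)$, $n\in Z^{+}$ and $r^{n}s\in (N:_{R}M)$ force $rs\in (N:_{R}M)$.

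Next I would fix such $r,s\in h(R)$ and $n\in Z^{+}$ with $r^{n}s\in(N:_{R}M)$, so that $r^{n}sM\subseteq N$, and aim to show $rsM\subseteq N$. Since $N=\bigoplus_{\alpha\in G}N_{\alpha}$ is a graded submodule and $rs\in h(R)$, it suffices to prove $rsm\in N$ for every homogeneous $m\in h(M)$ (an arbitrary element of $M$ being a finite sum of such). Fixing $m\in h(M)$, the element $sm$ lies in $h(M)$ because $s\in h(R)$ and $m\in h(M)$, and from $r^{n}sM\subseteq N$ we get $r^{n}(sm)=r^{n}sm\in N$.

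Now I would invoke the hypothesis that $N$ is a graded semiprime submodule of $M$: with $r\in h(R)$, $sm\in h(M)$ and $n\in Z^{+}$ satisfying $r^{n}(sm)\in N$, the defining property of a graded semiprime submodule yields $r(sm)=(rs)m\in N$. Since $m\in h(M)$ was arbitrary, this gives $rsM\subseteq N$, i.e. $rs\in (N:_{R}M)$, which is exactly what was needed. Hence $(N:_{R}M)$ is a graded semiprime ideal and $N$ is a graded quasi-semiprime submodule of $M$.

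I do not expect any serious obstacle here; the argument is essentially a direct translation between the module-level and ideal-level semiprime conditions. The only point requiring a small amount of care is the reduction to homogeneous $m$: one must use that $N$ is a graded submodule (so membership can be tested componentwise) together with the fact that $sm$ remains homogeneous, so that the hypothesis on $N$ actually applies.
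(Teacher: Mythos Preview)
Your argument is correct and complete: you verify that $(N:_{R}M)$ is a proper graded ideal, then reduce the graded semiprime condition on $(N:_{R}M)$ to the module-level condition on $N$ by testing on homogeneous $m\in h(M)$ and using that $sm\in h(M)$; every step is justified.

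The paper, by contrast, does not give an argument at all but simply cites \cite[Theorem~2.4]{1}, where the fact that $(N:_{R}M)$ is a graded semiprime ideal whenever $N$ is a graded semiprime submodule is recorded. Your proof is a self-contained unpacking of exactly that implication, so the two are consistent; the advantage of your version is that it is elementary and does not rely on the reader having access to the cited paper, while the paper's citation keeps the exposition short.
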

\begin{proof}
 By \cite[Theorem 2.4]{1}.
 \end{proof}
The next example shows that a graded quasi-semiprime submodule is not necessarily graded semiprime submodule.
\begin{example}
 Let $G=%
\mathbb{Z}
_{2}$, $R=%
\mathbb{Z}
$ be a $G$-graded ring with $R_{0}=%
\mathbb{Z}
$ and $R_{1}=\{0\}$. Let $M=%
\mathbb{Z}
\times
\mathbb{Z}
$ be a graded $R$-module with $M_{0}=%
\mathbb{Z}
\times \{0\}$ and $M_{1}=\{0\}\times
\mathbb{Z}
$. Now, consider a submodule $N=4%
\mathbb{Z}
\times \{0\}$ of $M$. Then it is a graded submodule and $(N:_{R}M)=\{0\}$ is
a graded semiprime ideal of $R,$ and so $N$ is a graded quasi-semiprime
submodule of $R.$ But the graded submodule $N$ is not graded semiprime
submodule of $M,$ since $2^{2}(3,0)\in N$ but $2(3,0)\notin N.$
\end{example}
\begin{example}
 Let $G=%
\mathbb{Z}
_{2}$, $R=%
\mathbb{Z}
$ be a $G$-graded ring with $R_{0}=%
\mathbb{Z}
$ and $R_{1}=\{0\}$. Let $M=%
\mathbb{Z}
_{8}$ be a $G$-graded $R$-module with $M_{0}=%
\mathbb{Z}
_{8}$ and $M_{1}=\{0\}$. Now, consider a submodule $N=<4>$ of $M.$ Then it is
a graded submodule and $(N:_{R}M)=4%
\mathbb{Z}
$  is not a graded semiprime ideal of $R$ since $2^{2}1=4\in 4%
\mathbb{Z}
$ but $2\cdot 1=2\notin 4%
\mathbb{Z}
.$ Then $N$ is not graded quasi-semiprime submodule of $M.$
 \end{example}
Recall that a graded $R$-module $M$ is called \emph{a graded multiplication} if for
each graded submodule $N$ of $M$, we have $N=IM$ for some graded ideal $I$ of $R$. If $N$
is graded submodule of a graded multiplication module $M$, then $N=(N:_{R}M)M$.

\begin{theorem}
Let $R$ be a $G$-graded ring, $M$ a graded multiplication $R$%
-module and $N$ a proper graded submodule of $M$. Then $N$ is a graded
quasi-semiprime submodule of $M$ if and only if $N$ is a graded semiprime
submodule of $M$.
\end{theorem}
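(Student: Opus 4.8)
The plan is to prove the two implications separately. The implication ``graded semiprime $\Rightarrow$ graded quasi-semiprime'' requires no multiplication hypothesis and is exactly the content of Theorem 2.2, so for that direction I would simply invoke that result. All the substance is in the converse, and this is where graded multiplication is used.

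For the converse, suppose $N$ is graded quasi-semiprime, i.e. $I:=(N:_{R}M)$ is a graded semiprime ideal of $R$, and let $r\in h(R)$, $m\in h(M)$ and $n\in\mathbb{Z}^{+}$ with $r^{n}m\in N$; the goal is to deduce $rm\in N$. First I would record two ``grading bookkeeping'' facts. Since $m$ is homogeneous, say $m\in M_{\lambda}$, the cyclic submodule $Rm$ is a graded submodule of $M$, with $(Rm)_{\gamma}=R_{\gamma\lambda^{-1}}m$ for each $\gamma\in G$. Consequently, $M$ being a graded multiplication module, $Rm=(Rm:_{R}M)M$; set $K:=(Rm:_{R}M)$, which is a graded ideal of $R$ by \cite[Lemma 2.1]{3}.

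Now I would run the following short chain. Because $r^{n}m\in N$ and $N$ is a submodule, $R(r^{n}m)\subseteq N$; and $R(r^{n}m)=r^{n}(Rm)=r^{n}(KM)=(r^{n}K)M$, so $(r^{n}K)M\subseteq N$, that is, $r^{n}K\subseteq(N:_{R}M)=I$. Fix a homogeneous element $k\in K$. Then $r^{n}k\in I$ with $r,k\in h(R)$, so the graded-semiprime property of $I$ gives $rk\in I$; since $K$ is graded and $I$ is an additive subgroup, it follows that $rK\subseteq I$. Hence $r(Rm)=r(KM)=(rK)M\subseteq IM=N$, and as $m=1\cdot m\in Rm$ we get $rm\in N$. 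Therefore $N$ is a graded semiprime submodule, completing the equivalence.

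The argument is essentially mechanical once the right objects are set up; the only two points that need a little care — neither being a genuine obstacle — are the grading bookkeeping (that $Rm$ is a graded submodule, hence equals $KM$ for a graded ideal $K$, so that the residuals interact cleanly with multiplication by $r$ and $r^{n}$) and the passage from $r^{n}K\subseteq I$ to $rK\subseteq I$, which must be performed on the homogeneous components of $K$ in order to apply the definition of a graded semiprime ideal.
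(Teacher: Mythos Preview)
Your argument is correct. The forward implication is exactly Theorem~2.2, and your converse is a clean direct computation: using that $Rm$ is graded (hence $Rm=KM$ for $K=(Rm:_{R}M)$), you pass from $r^{n}m\in N$ to $r^{n}K\subseteq (N:_{R}M)$, apply the graded semiprime property componentwise on $K$ to get $rK\subseteq (N:_{R}M)$, and then use $N=(N:_{R}M)M$ (again graded multiplication) to conclude $rm\in N$. Both invocations of the multiplication hypothesis are exactly where they should be, and the care you took to reduce to homogeneous $k\in K$ before applying the semiprime condition is the right move.

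The paper itself does not give an argument for this theorem; it simply cites \cite[Theorem~2.5]{1}. So your proof is strictly more informative than what appears here, supplying the content that the paper outsources. Presumably the cited result proceeds along similar lines (there is essentially only one natural route once you write $Rm=KM$), so your proof is in the expected spirit; it just makes the mechanism explicit and self-contained, which is a genuine improvement in exposition over a bare citation.
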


\begin{proof}
By \cite[Theorem 2.5]{1}.
 \end{proof}
\begin{theorem}
Let $R$ be a $G$-graded ring, $M$ a graded multiplication $R$-module and $N$
a proper graded submodule of $M$. Then the following statements are
equivalent:
\begin{enumerate}[\upshape (i)]

\item $N$ is a graded quasi-semiprime submodule of $M.$

\item If whenever $I^{k}M\subseteq N,$ where $I$ is a graded ideal of $R$ and
$k\in
\mathbb{Z}
^{+},$ then $IM\subseteq N$.
\end{enumerate}
\end{theorem}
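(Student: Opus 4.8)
The plan is to express both conditions entirely in terms of the graded ideal $P:=(N:_{R}M)$. Two elementary observations about colon ideals do the translation: for any graded ideal $I$ of $R$ and any $k\in\mathbb{Z}^{+}$ one has $I^{k}M\subseteq N$ if and only if $I^{k}\subseteq P$, and likewise $IM\subseteq N$ if and only if $I\subseteq P$ (one direction is the definition of the colon ideal, the other uses $PM\subseteq N$, which holds in general and becomes $N=PM$ when $M$ is a graded multiplication module). Under this dictionary, statement (ii) becomes: \emph{for every graded ideal $I$ of $R$ and every $k\in\mathbb{Z}^{+}$, $I^{k}\subseteq P$ implies $I\subseteq P$.} Thus the theorem amounts to showing that the proper graded ideal $P$ is graded semiprime in the element-wise sense of Section~1 exactly when it has this ideal-theoretic ``radical-closed'' property.

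For (i)$\Rightarrow$(ii), assume $P$ is a graded semiprime ideal and let $I$ be a graded ideal with $I^{k}\subseteq P$. Since $I$ is graded it is the sum of its homogeneous components, so it suffices to check that every homogeneous $r\in I$ lies in $P$. For such $r$ we have $r^{k}\in I^{k}\subseteq P$; as $1\in R_{e}\subseteq h(R)$, the relation $r^{k}\cdot 1\in P$ applied to the homogeneous pair $(r,1)$ with exponent $k$ forces $r=r\cdot 1\in P$ by the definition of a graded semiprime ideal. Hence $I\subseteq P$, i.e.\ $IM\subseteq N$.

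For (ii)$\Rightarrow$(i), note first that $P$ is proper because $N$ is proper (otherwise $P=R$ and $M=RM\subseteq N$). To see $P$ is graded semiprime, take $r,s\in h(R)$ and $n\in\mathbb{Z}^{+}$ with $r^{n}s\in P$. Then $(rs)^{n}=r^{n}s^{n}=(r^{n}s)\,s^{n-1}\in P$ since $P$ is an ideal. Put $I:=R(rs)$, a graded ideal because $rs$ is homogeneous, and observe $I^{n}=R(rs)^{n}\subseteq P$, so $I^{n}M\subseteq N$. Applying (ii) gives $IM\subseteq N$, that is $I\subseteq P$, and in particular $rs\in P$. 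Therefore $P=(N:_{R}M)$ is a graded semiprime ideal and $N$ is a graded quasi-semiprime submodule of $M$.

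The substance of the argument is really just this translation step, and the one place to be careful — the point I would treat as the main (if modest) obstacle — is the bookkeeping around homogeneity: that the ideal generated by a homogeneous element is again graded, that $(R(rs))^{n}=R(rs)^{n}$, and that a containment of graded ideals may be tested on homogeneous generators, together with the mild observation that the element-wise ``$r^{n}s\in I\Rightarrow rs\in I$'' form of graded semiprimeness is what is actually needed here. Once these are in place, no computation beyond the identities above is required.
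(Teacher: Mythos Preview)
Your argument is correct. The direction (ii)$\Rightarrow$(i) is essentially identical to the paper's: both pass from $r^{k}s\in(N:_{R}M)$ to the principal graded ideal $I=(rs)$, observe $I^{k}M\subseteq N$, and apply (ii). The direction (i)$\Rightarrow$(ii), however, is handled differently. The paper invokes Theorem~2.5 (in a graded multiplication module, graded quasi-semiprime equals graded semiprime) together with an external result \cite[Proposition~2.6]{5} characterizing graded semiprime submodules via the ideal-power condition. You instead argue directly at the level of the colon ideal $P=(N:_{R}M)$: from $I^{k}\subseteq P$ and the element-wise graded semiprimeness of $P$ you deduce $r\in P$ for each homogeneous $r\in I$, hence $I\subseteq P$. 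This buys you something real: your proof never uses that $M$ is a graded multiplication module, so it establishes the equivalence for an arbitrary graded $R$-module, whereas the paper's route through Theorem~2.5 genuinely consumes that hypothesis.
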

\begin{proof}
$(i)\Rightarrow (ii)$  By Theorem 2.5 and \cite[Proposition 2.6]{5}

$(ii)\Rightarrow (i)$ Let $r^{k}s\in (N:_{R}M)$ where $r,s\in h(R)$ and $%
k\in
\mathbb{Z}
^{+}.$ So $r^{k}sM\subseteq N.$ Let $I=(rs)$ be a graded ideal of $R$
generated by $rs$. Then $I^{k}M\subseteq N.$ By our assumption we
have $IM=(rs)M\subseteq N.$ This yields that $rs\in (N:_{R}M)$. So $(N:_{R}M)$
is a graded semiprime ideal of $R.$ Therefore $N$ is a graded
quasi-semiprime submodule of $M.$
\end{proof}
Recall that a proper graded ideal $I$ of a $G$-graded ring $R$ is said to be
\textit{a graded prime ideal }if whenever $r,s\in h(R)$ with $rs\in I$, then
either $r\in I$ or $\ s\in I$ (see \cite{11}.) A proper graded ideal $J
$ of $R$ is said to be \textit{a graded primary ideal }if whenever $r,s\in
h(R)$ with $rs\in J$, then either $r\in J$ or $\ s^{n}\in J$ for some $n\in
\mathbb{Z}
^{+}$ (see \cite{11}.)

\begin{theorem}
Let $R$ be a $G$-graded ring, $M$ a graded $R$-module and $N$ a
graded quasi-semiprime submodule of $M$. If $(N:_{R}M)$ is a graded primary
ideal of $R$, then $(N:_{R}M)$ is a graded prime ideal of $R$.
\end{theorem}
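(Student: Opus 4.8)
Write $I=(N:_{R}M)$. The plan is to verify the definition of a graded prime ideal for $I$ directly, exploiting the two pieces of information we have about $I$: it is a graded semiprime ideal (since $N$ is, by hypothesis, a graded quasi-semiprime submodule, this is exactly the definition) and, by the extra assumption, it is a graded primary ideal.

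First I would record the trivial preliminaries. Since $N$ is a proper graded submodule of $M$, the ideal $I=(N:_{R}M)$ is a proper graded ideal of $R$: if $I=R$ then $1\in I$, whence $M=1\cdot M\subseteq N$, contradicting the properness of $N$. (Properness is in any case built into the definition of a graded primary ideal.) I would also note that $1\in R_{e}\subseteq h(R)$, which is the one observation that makes the argument go through.

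Now take $r,s\in h(R)$ with $rs\in I$; the goal is to show $r\in I$ or $s\in I$. Because $I$ is a graded primary ideal of $R$, either $r\in I$ — in which case we are done — or $s^{n}\in I$ for some $n\in\mathbb{Z}^{+}$. Suppose the latter holds. Then $s^{n}\cdot 1\in I$ with $s,1\in h(R)$ and $n\in\mathbb{Z}^{+}$, so the graded semiprime property of $I$ gives $s\cdot 1=s\in I$. In either case one of $r,s$ belongs to $I$, so $I=(N:_{R}M)$ is a graded prime ideal of $R$.

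There is no real obstacle here: the entire content is the one-line reduction that the graded semiprime hypothesis upgrades "$s^{n}\in I$'' to "$s\in I$'' as soon as one takes the second argument in the definition of graded semiprime to be the homogeneous element $1$. The only things worth a word are that $I$ is proper and that $1$ is homogeneous, both of which are immediate.
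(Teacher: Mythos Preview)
Your proof is correct and follows essentially the same route as the paper's: assume $rs\in (N:_{R}M)$ with $r\notin (N:_{R}M)$, use the graded primary hypothesis to get $s^{k}\in (N:_{R}M)$, then apply graded semiprimeness to conclude $s\in (N:_{R}M)$. The only cosmetic differences are that the paper phrases the intermediate step via the graded radical $Gr((N:_{R}M))$ and leaves implicit your observation that one applies the semiprime condition with second argument $1\in h(R)$.
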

\begin{proof}
Suppose that $(N:_{R}M)$ is a graded primary ideal of $R$.
Let $rs\in (N:_{R}M)$ and $r\notin (N:_{R}M).$ Then $s\in Gr((N:_{R}M))$ as $%
(N:_{R}M)$ is a graded primary ideal of $R.$ Hence $s^{k}\in (N:_{R}M)$ for
some $k\in
\mathbb{Z}
^{+}.$ Since $(N:_{R}M)$ is a graded semiprime ideal of $R,$ we have $s\in
(N:_{R}M)$. Therefore $(N:_{R}M)$ is a graded prime ideal of $R.$
\end{proof}

Let $R$ be a $G$-graded ring, $M$ a graded $R$-module and $N$ a graded
submodule of $M$. The graded envelope submodule $RGE_{M}(N)$ of $N$ in $M$
is a graded submodule of $M$ generated by the set $GE_{M}(N)=\{rm:r\in
h(R),m\in h(M)$ such that $r^{n}m\in N$ for some $n\in
\mathbb{Z}
^{+}\}$ (see \cite[Definition 1]{2}.)
\begin{theorem}
Let $R$ be a $G$-graded ring, $M$ a graded
multiplication $R$-module and $N$ a proper graded submodule of $M$. Then $N$
is a graded quasi-semiprime submodule of $M$ if and only if $N=RGE_{M}(N).$
\end{theorem}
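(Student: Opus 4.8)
The plan is to combine the equivalence from Theorem 2.5 (which identifies graded quasi-semiprime submodules with graded semiprime submodules in the multiplication setting) with the known characterization of graded semiprime submodules via the graded envelope. Concretely, I would first recall that for \emph{any} graded submodule $N$ of $M$ one always has $N \subseteq RGE_M(N)$, since every generator $rm$ of $RGE_M(N)$ with $r^n m \in N$ trivially includes the case $n=1$; so the content of the statement is the reverse inclusion $RGE_M(N) \subseteq N$, i.e. that $N$ is ``closed'' under taking these radical-type elements.

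Next I would invoke Theorem 2.5: since $M$ is a graded multiplication module, $N$ is graded quasi-semiprime if and only if $N$ is graded semiprime. So it suffices to prove that a proper graded submodule $N$ of $M$ is graded semiprime if and only if $N = RGE_M(N)$. For the forward direction, assume $N$ is graded semiprime. Every generator of $GE_M(N)$ has the form $rm$ with $r \in h(R)$, $m \in h(M)$, and $r^n m \in N$ for some $n \in \mathbb{Z}^+$; by the definition of graded semiprime submodule this forces $rm \in N$. Hence $GE_M(N) \subseteq N$, and since $N$ is a submodule it contains the submodule it generates, giving $RGE_M(N) \subseteq N$ and therefore $RGE_M(N) = N$. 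For the converse, suppose $N = RGE_M(N)$ and take $r \in h(R)$, $m \in h(M)$, $n \in \mathbb{Z}^+$ with $r^n m \in N$; then by definition $rm \in GE_M(N) \subseteq RGE_M(N) = N$, so $N$ is graded semiprime. This chain of equivalences then yields the theorem.

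The main obstacle I anticipate is purely bookkeeping rather than conceptual: one must be careful that $RGE_M(N)$ is genuinely a \emph{graded} submodule (this is guaranteed by its definition as the graded submodule generated by the homogeneous set $GE_M(N)$) and that the multiplication hypothesis is actually needed only to pass through Theorem 2.5 — the equivalence ``graded semiprime $\iff N = RGE_M(N)$'' does not itself require $M$ to be a multiplication module, though stating it in that generality is not the point here. A secondary subtlety is checking that the argument respects homogeneity throughout: the generators we manipulate are homogeneous by construction, so no decomposition into homogeneous components is needed, which keeps the proof clean. I would present it as: $(\Rightarrow)$ apply Theorem 2.5 to reduce to graded semiprime, then use the definition to get $GE_M(N) \subseteq N$; $(\Leftarrow)$ reverse this and apply Theorem 2.5 again.
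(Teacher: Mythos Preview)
Your proposal is correct and follows essentially the same route as the paper's proof: reduce via Theorem~2.5 to the graded semiprime case, then use the definition of $GE_M(N)$ to show $GE_M(N)\subseteq N$ for the forward direction and $rm\in GE_M(N)\subseteq N$ for the converse. The only cosmetic difference is that the paper cites Theorem~2.2 (rather than Theorem~2.5 a second time) to pass from graded semiprime back to graded quasi-semiprime, which of course amounts to the same thing.
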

\begin{proof}
Suppose that $N$ is a graded quasi-semiprime submodule of $M$. Then $N$ is
a graded semiprime submodule of $M$ by Theorem 2.5.
Clearly, $N\subseteq RGE_{M}(N)$. Now, let $x\in GE_{M}(N).$ Then $x=rm$ for
some $r\in h(R),$ $m\in h(M)$ and there exists $k\in
\mathbb{Z}
^{+}$ such that $r^{k}m\in N.$ Then $rm\in N$ as $N$ is a graded semiprime
submodule of $M$. Hence $GE_{M}(N)\subseteq N.$ This yields that $%
RGE_{M}(N)\subseteq N.$ Thus $N=RGE_{M}(N).$ Conversely, suppose that $%
N=RGE_{M}(N)$. Let $r\in h(R)$, $m\in h(M)$ and $k\in
\mathbb{Z}
^{+}$ such that $r^{k}m\in N$, so by the definition of the set $GE_{M}(N)$
we have $rm\in GE_{M}(N).$ Then $rm\in N$ as $GE_{M}(N)\subseteq RGE_{M}(N)=N$, so $N$ is a graded semiprime submodule of $M$. Therefore $N$ is a graded
quasi-semiprime submodule of $M$ by Theorem 2.2.
\end{proof}

Let $R$ be a $G$-graded ring and $M$, $M^{\prime }$ be two graded $R$-modules. Let $%
f:M\rightarrow M^{\prime }$ be an $R$-module homomorphism. Then $f$ is said
to be \emph{a graded homomorphism }if $f(M_{\alpha })\subseteq M_{\alpha }^{\prime }
$ for all $\alpha $ $\in G$ (see \cite{10}.)
\begin{theorem}
Let $R$ be a $G$-graded ring, $M$, $M^{\prime }$ be two
graded $R$-modules and $f:M\rightarrow M^{\prime }$a graded epimorphism.
 \begin{enumerate}[\upshape (i)]

\item If $N$ is a graded quasi-semiprime submodule of $M$ such that $%
ker(f)\subseteq N$, then $f(N)$ is a graded quasi-semiprime submodule of $%
M^{\prime }$.

\item If $N^{\prime }$ is a graded quasi-semiprime submodule of $M^{\prime }$%
, then $f^{-1}(N^{\prime })$ is a graded quasi-semiprime submodule of $M.$

\end{enumerate}
\end{theorem}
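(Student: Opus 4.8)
The plan is to reduce both statements to facts about the associated graded ideals $(N:_R M)$ and $(N':_R M')$, since ``graded quasi-semiprime'' is by definition a statement about these ideals being graded semiprime. For part (i), I would first compute $(f(N):_R M')$ in terms of $(N:_R M)$. Since $f$ is a graded epimorphism, every element of $M'$ is $f(x)$ for some $x\in M$; then $r f(x)=f(rx)\in f(N)$ for all $x$ is equivalent, using $\ker(f)\subseteq N$, to $rx\in N$ for all $x\in M$, i.e.\ to $r\in (N:_R M)$. Hence $(f(N):_R M')=(N:_R M)$ as graded ideals of $R$. The hypothesis that $N$ is graded quasi-semiprime says $(N:_R M)$ is graded semiprime, so $(f(N):_R M')$ is graded semiprime, which is exactly what is needed. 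One also checks $f(N)$ is a proper graded submodule: it is graded because $f$ is a graded homomorphism, and proper because $f$ surjective together with $f(N)=M'$ would force $N+\ker(f)=M$, hence $N=M$ (using $\ker f\subseteq N$), contradicting properness of $N$.

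For part (ii), I would similarly show $(f^{-1}(N'):_R M)=(N':_R M')$. If $r\in (N':_R M')$ then for any $x\in M$ we have $f(rx)=rf(x)\in N'$, so $rx\in f^{-1}(N')$; conversely if $r\in (f^{-1}(N'):_R M)$ then for any $y\in M'$ write $y=f(x)$ and get $ry=f(rx)\in N'$ since $rx\in f^{-1}(N')$. Thus the two graded ideals coincide, and graded semiprimeness of $(N':_R M')$ transfers to $(f^{-1}(N'):_R M)$. Properness of $f^{-1}(N')$ follows because $f^{-1}(N')=M$ would give $N'=f(M)=M'$. That $f^{-1}(N')$ is a graded submodule is standard for a graded homomorphism: $f^{-1}(N')=\bigoplus_{\alpha\in G}\bigl(f^{-1}(N')\cap M_\alpha\bigr)$ because $f(M_\alpha)\subseteq M'_\alpha$ forces each homogeneous component of an element of $f^{-1}(N')$ to again lie in $f^{-1}(N')$.

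The only mildly delicate point I anticipate is the identification $(f(N):_R M')=(N:_R M)$ in part (i): the inclusion $(N:_R M)\subseteq (f(N):_R M')$ is immediate, but the reverse inclusion genuinely uses $\ker(f)\subseteq N$ — without it one only gets $rM\subseteq N+\ker(f)$ rather than $rM\subseteq N$. Once the ideal identifications are in place, both conclusions are immediate from Definition 2.1, so no induction on $n$ or manipulation of homogeneous components beyond the routine checks above is required.
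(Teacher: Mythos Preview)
Your argument is correct. The route differs slightly from the paper's: you prove the stronger structural fact that $(f(N):_{R}M')=(N:_{R}M)$ and $(f^{-1}(N'):_{R}M)=(N':_{R}M')$ outright, and then transfer graded semiprimeness of the ideal in one step. The paper instead verifies the defining condition of a graded semiprime ideal directly: it takes $r^{k}s\in (f(N):_{R}M')$ with $r,s\in h(R)$, pushes this through $f$ to get $r^{k}s\in (N:_{R}M)$ (using $\ker f\subseteq N$ exactly where you do), applies the hypothesis to obtain $rs\in (N:_{R}M)$, and pushes back. The underlying computations are identical; your packaging is cleaner and yields a reusable equality of colon ideals, while the paper's version stays closer to the definition and makes the role of homogeneous elements $r,s\in h(R)$ explicit. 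Either way the key step is the one you flagged: the reverse inclusion in $(f(N):_{R}M')\subseteq (N:_{R}M)$ genuinely needs $\ker f\subseteq N$.
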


\begin{proof}
$(i)$ Suppose that $N$ is a graded quasi-semiprime submodule of $M$ and $%
ker(f)\subseteq N.$ It is easy to see that $f(N)\neq
M^{\prime }.$ Now let $r^{k}s\in (f(N):_{R}M^{\prime })$ where $r,s\in
h(R)$ and $k\in
\mathbb{Z}
^{+},$ it follows that, $r^{k}sM^{\prime }\subseteq f(N).$ Then $%
r^{k}sM^{\prime }=r^{k}sf(M)=f(r^{k}sM)\subseteq f(N)$ since $f$ is an
epimorphism. This yields that $r^{k}sM\subseteq N$ since $ker(f)\subseteq N,$
i.e., $\ r^{k}s\in (N:_{R}M).$ Since $N$ is a graded quasi-semiprime
submodule of $M$, we get $rs\in (N:_{R}M),$ i.e., $rsM\subseteq N.$ Hence $%
f(rsM)=rsf(M)=rsM^{\prime }\subseteq f(N),$ i.e., $rs\in (f(N):_{R}M^{\prime
}).$ Therefore, $f(N)$ is a graded quasi-semiprime submodule of $M^{\prime
}. $

$(ii)$ Suppose that $N^{\prime }$ is a graded quasi-semiprime submodule of $%
M^{\prime }$. It is easy to see that $f^{-1}(N^{\prime
})\neq M.$ Let $r^{k}s\in (f^{-1}(N^{\prime }):_{R}M)$ where $r,s\in h(R)$\
and $k\in
\mathbb{Z}
^{+},$ it follows that, $r^{k}sM\subseteq f^{-1}(N^{\prime }).$ Then $%
r^{k}sf(M)=r^{k}sM^{\prime }\subseteq \ N^{\prime },\ $i.e.$,$ $r^{k}s\in
(N^{\prime }:_{R}M^{\prime }).$ Then $rs\in (N^{\prime }:_{R}M^{\prime })$
as $N^{\prime }$ is a graded quasi-semiprime submodule of $M^{\prime }$. So $%
rsM^{\prime }=rsf(M)=f(rsM)\subseteq N^{\prime }.$ It follows that $%
rsM\subseteq f^{-1}(N^{\prime }).$ So $rs\in (f^{-1}(N^{\prime }):_{R}M).$
Therefore $f^{-1}(N^{\prime })$ is a graded quasi-semiprime submodule of $M.$
\end{proof}
\begin{theorem}
 Let $R$ be a $G$-graded ring, $M$ a graded $R$-module
and $K$ a proper graded submodule of $M$. If $N$ is a graded quasi-semiprime
submodule of $M$ with $N\subseteq K$ and $(N:_{R}M)$ is a graded maximal
ideal of $R$, then $K$ is a graded quasi-semiprime submodule of $M$.
\end{theorem}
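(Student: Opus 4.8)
The plan is to reduce everything to the identity $(K:_{R}M)=(N:_{R}M)$ and then quote the definition of graded quasi-semiprime submodule. First I would record that since $N\subseteq K$, every $r\in R$ with $rM\subseteq N$ also satisfies $rM\subseteq K$, so $(N:_{R}M)\subseteq (K:_{R}M)$. By \cite[Lemma 2.1]{3} the set $(K:_{R}M)$ is a graded ideal of $R$, and it is a \emph{proper} graded ideal: if $(K:_{R}M)=R$ then $1\in (K:_{R}M)$, whence $M=1\cdot M\subseteq K$, contradicting that $K$ is a proper graded submodule of $M$.

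Next I would use the hypothesis that $(N:_{R}M)$ is a graded maximal ideal of $R$. We have a chain of graded ideals $(N:_{R}M)\subseteq (K:_{R}M)\subsetneq R$, so maximality of $(N:_{R}M)$ among proper graded ideals forces $(K:_{R}M)=(N:_{R}M)$.

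Finally, since $N$ is a graded quasi-semiprime submodule of $M$, the ideal $(N:_{R}M)$ is a graded semiprime ideal of $R$ by Definition~2.1; hence $(K:_{R}M)=(N:_{R}M)$ is a graded semiprime ideal of $R$ as well. Combined with the properness of $K$ noted above, this is precisely the statement that $K$ is a graded quasi-semiprime submodule of $M$.

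I do not expect a genuine obstacle here; the argument is essentially a one-line ideal-chain collapse. The only point that needs care is the reading of ``graded maximal ideal'' as maximal among proper graded ideals, which is what makes the inclusion $(N:_{R}M)\subseteq (K:_{R}M)$ of graded ideals an equality once we know $(K:_{R}M)\neq R$; establishing that properness of $(K:_{R}M)$ (from properness of $K$) is the step I would state explicitly.
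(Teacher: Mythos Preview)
Your proposal is correct and follows essentially the same approach as the paper: obtain $(N:_{R}M)\subseteq (K:_{R}M)$ from $N\subseteq K$, invoke \cite[Lemma 2.1]{3} to see that $(K:_{R}M)$ is a proper graded ideal, use graded maximality of $(N:_{R}M)$ to force equality, and conclude that $(K:_{R}M)$ is graded semiprime. Your write-up is slightly more explicit about why $(K:_{R}M)$ is proper, but the argument is the same.
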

\begin{proof}
Suppose that $N\subseteq K,$ it follows that $(N:_{R}M)\subseteq (K:_{R}M).$
By \cite[Lemma 2.1]{3}, $(K:_{R}M)$ is a
proper graded ideal of $R$. Then $(N:_{R}M)=(K:_{R}M)$ as $(N:_{R}M)$ is a
graded maximal ideal of $R$.\ This yields that $(K:_{R}M)$ is a graded
semiprime ideal of $R$. Therefore $K$ is a graded quasi-semiprime submodule
of $M$.
\end{proof}
\begin{theorem}
Let $R$ be a $G$-graded ring, $M$ a graded $R$-module
and $N$ and $K$ be two graded quasi-semiprime submodules of $M.$ Then $N\cap
K$ is a graded quasi-semiprime submodule of $M$.
\end{theorem}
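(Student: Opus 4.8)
The plan is to reduce everything to the colon ideal via the defining property of graded quasi-semiprime submodules. First I would record that $N\cap K$ is again a proper graded submodule of $M$: the intersection of two graded submodules is graded, and since $N$ is proper we have $N\cap K\subseteq N\subsetneq M$, so $N\cap K\neq M$.

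The heart of the argument is the identity
\[
(N\cap K:_{R}M)=(N:_{R}M)\cap (K:_{R}M).
\]
This is a routine double inclusion: for $r\in R$ one has $rM\subseteq N\cap K$ if and only if $rM\subseteq N$ and $rM\subseteq K$, i.e. if and only if $r\in (N:_{R}M)$ and $r\in (K:_{R}M)$. By \cite[Lemma 2.1]{3} each of $(N:_{R}M)$ and $(K:_{R}M)$ is a graded ideal of $R$, hence so is their intersection, and it is proper since it is contained in the proper ideal $(N:_{R}M)$.

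Next I would show that the intersection of two graded semiprime ideals is graded semiprime. Write $I=(N:_{R}M)$ and $J=(K:_{R}M)$, which are graded semiprime ideals of $R$ because $N$ and $K$ are graded quasi-semiprime submodules of $M$. Suppose $r,s\in h(R)$ and $n\in \mathbb{Z}^{+}$ with $r^{n}s\in I\cap J$. Then $r^{n}s\in I$, so $rs\in I$ since $I$ is graded semiprime; likewise $r^{n}s\in J$ forces $rs\in J$. Hence $rs\in I\cap J$, proving $I\cap J$ is a graded semiprime ideal of $R$. Combining this with the colon identity above, $(N\cap K:_{R}M)=I\cap J$ is a graded semiprime ideal of $R$, so $N\cap K$ is a graded quasi-semiprime submodule of $M$. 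I do not expect any genuine obstacle here: the only thing to be slightly careful about is checking the colon identity and the properness of $N\cap K$, both of which are immediate.
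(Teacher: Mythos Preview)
Your proposal is correct and follows essentially the same route as the paper's proof: pass to the colon ideals, use that $(N\cap K:_{R}M)=(N:_{R}M)\cap(K:_{R}M)$, and observe that the intersection of two graded semiprime ideals is graded semiprime. You are in fact a bit more careful than the paper, which does not explicitly verify properness of $N\cap K$ or spell out the colon identity.
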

\begin{proof}
 Let $r^{k}s\in (N\cap K:_{R}M)$\ where $r,s\in h(R)$\ and $%
k\in
\mathbb{Z}
^{+}.$ This yields that $r^{k}s\in (N:_{R}M)\cap (K:_{R}M).$ Since $%
(N:_{R}M)\ $and $(K:_{R}M)$ are graded semiprime ideals of $R$, we have $%
rs\in (N:_{R}M)\cap (K:_{R}M)$ and so $rs\in (N\cap K:_{R}M).$ Therefore $%
N\cap K$ is a graded quasi-semiprime submodule of $M$.

\end{proof}

Let $R$ be a $G$-graded ring and $M$ be a graded $R$-module, $M$ is called a
graded semiprime module if $(0)$ is a graded semiprime submodule of $M$.

\begin{definition}
 Let $R$ be a $G$-graded ring and $M$ be a graded $R$-module. Then $M$ is
said to be \emph{a graded quasi-semiprime module }if $Ann_{R}N$ is a graded
semiprime ideal of $R$, for every non-zero graded submodule $N$ of $M$.
\end{definition}

 \begin{theorem}
 Let $R$ be a $G$-graded ring and $M$ be a graded $R$%
-module. If $M$ is a graded semiprime module, then $M$ is a graded
quasi-semiprime module.
 \end{theorem}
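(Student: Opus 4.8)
The plan is to unwind both definitions and reduce the statement about submodules to a statement about the zero submodule that we already know. Recall that $M$ being a graded semiprime module means $(0)$ is a graded semiprime submodule of $M$, i.e. whenever $r\in h(R)$, $m\in h(M)$ and $n\in\mathbb{Z}^{+}$ with $r^{n}m=0$, then $rm=0$. On the other hand, $M$ being a graded quasi-semiprime module means that for every non-zero graded submodule $N$ of $M$, the annihilator $Ann_{R}N=(0:_{R}N)$ is a graded semiprime ideal of $R$. So I would fix a non-zero graded submodule $N$ of $M$ and show $Ann_{R}N$ is graded semiprime.

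First I would observe that $Ann_{R}N$ is a graded ideal of $R$: indeed $N$ is itself a graded $R$-module, and $Ann_{R}N=(0_{N}:_{R}N)$, so this follows from \cite[Lemma 2.1]{3} applied to the zero submodule of $N$. It is also proper since $N\neq 0$ forces $1\notin Ann_{R}N$. Then I would verify the semiprime condition directly: suppose $r,s\in h(R)$ and $k\in\mathbb{Z}^{+}$ with $r^{k}s\in Ann_{R}N$, so $r^{k}sN=0$. For an arbitrary homogeneous element $m\in h(N)\subseteq h(M)$, the element $sm$ lies in $h(M)$ (it is homogeneous, being a product of homogeneous elements), and $r^{k}(sm)=r^{k}sm=0$. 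Since $M$ is a graded semiprime module, applying the defining property with the homogeneous element $r$ and the homogeneous element $sm$ gives $r(sm)=0$, i.e. $rsm=0$. As this holds for every $m\in h(N)$ and $N$ is generated by its homogeneous elements, we conclude $rsN=0$, that is $rs\in Ann_{R}N$. Hence $Ann_{R}N$ is a graded semiprime ideal of $R$, and since $N$ was arbitrary, $M$ is a graded quasi-semiprime module.

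The argument is essentially a straightforward transfer of the semiprime condition from $(0)\subseteq M$ to $Ann_{R}N$, so I do not anticipate a genuine obstacle; the only point requiring a little care is making sure the elements to which the graded-semiprime hypothesis on $M$ is applied really are homogeneous — this is why one works with a homogeneous generator $m$ of $N$ and notes that $sm\in h(M)$ — and that $Ann_{R}N$ is verified to be a \emph{proper graded} ideal before checking the semiprime condition, so that it qualifies as a graded semiprime ideal in the sense defined in the preliminaries.
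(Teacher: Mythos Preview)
Your proposal is correct and follows essentially the same route as the paper's own proof: fix a non-zero graded submodule $N$, take $r,s\in h(R)$ with $r^{k}s\in Ann_{R}N$, and use that $(0)$ is graded semiprime in $M$ to pass from $r^{k}sN=0$ to $rsN=0$. You add a little more care than the paper does---explicitly checking that $Ann_{R}N$ is a proper graded ideal and that the element $sm$ to which the hypothesis is applied is homogeneous---but the underlying argument is identical.
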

 \begin{proof}
 Suppose that $M$ is a graded semiprime module. Then
$0$ is a graded semiprime submodule of $M.$ Now, Let $N$ be a non-zero
graded submodule of $M$ and $r^{k}s\in $ $Ann_{R}N$  where $r,s\in h(R)$\
and $k\in
\mathbb{Z}
^{+}.$ It follows that $r^{k}sN=0.$ Then $rsN=0$ as $0\ $ is a graded
semiprime submodule of $M.$ Hence $rs\in Ann_{R}N,$ it follows that $%
Ann_{R}N$ is a graded semiprime ideal of $R.$ Therefore $M$ is a graded
quasi-semiprime module.
 \end{proof}


\bigskip\bigskip\bigskip\bigskip

\end{document}